\theoremstyle{plain}
\newtheorem{theorem}{Theorem}%[section]
\newtheorem{corollary}[theorem]{Corollary}
\newtheorem{proposition}[theorem]{Proposition}
\theoremstyle{definition}
\newtheorem{example}[theorem]{Example}
\newtheorem{definition}[theorem]{Definition}
\newtheorem{remark}[theorem]{Remark}
\theoremstyle{remark}
\begin{document}
\title{Kaplansky/Nagata-type  theorems for the half factorial domains}

\author{Tiberiu Dumitrescu}

\address{Facultatea de Matematica si Informatica,University of Bucharest,14 A\-ca\-de\-mi\-ei Str., Bucharest, RO 010014,Romania}
\email{tiberiu\_dumitrescu2003@yahoo.com }
\maketitle

%         Next label --->    15 

\begin{abstract}
We give   Kaplansky/Nagata-type theorems for the half factorial domains  inside the class of atomic domains.
\end{abstract}
\vspace{1cm}
Kaplansky's \cite[Theorem 5]{K}  states that an (integral) domain $A$ is a unique factorization domain  (UFD) iff every nonzero prime ideal contains a prime element. 
Subsequently, a theorem having this form  was called  in the literature a {\em Kaplansky-type theorem}. An example is \cite[Theorem 4.1]{Ka}  stating that a domain $A$ is a $\pi$-domain iff every nonzero prime ideal of $A$ contains an invertible prime ideal. Recall that a {\em $\pi$-domain} is a domain  whose proper principal ideals are products of prime ideals.
For examples  of Kaplansky-type theorems see \cite{AZ}, \cite{CK1}, \cite[page 284]{K}, \cite{Ki} and their reference list.
Analyzing the proof of \cite[Theorem 5]{K} (as done   in  \cite{AZ}) we get the following   abstract form. 

\begin{theorem} \label{6}
Let $A$ be a domain and  $\Gamma\subseteq A-\{0\}$.

$(i)$ If every nonzero prime ideal of $A$ cuts    $\Gamma$, then every   nonzero element of $A$ divides a product of elements of $\Gamma$.

$(ii)$ If $I$ is a nonzero ideal of $A$  such that 
$V(I)\subseteq \cup_{x\in \Gamma} V(x)$, then $I$ contains a product of elements of $\Gamma$. As usual $V(I)$  is the set of prime ideals containing $I$.
\end{theorem}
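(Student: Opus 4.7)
My plan is to prove part (ii) first by a standard Zorn-type argument in the style of Kaplansky, and then to deduce (i) as an immediate corollary by applying (ii) to the principal ideal $(a)$.

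For (ii), I would let $S$ denote the set of all finite products of elements of $\Gamma$ together with $1$; this is a multiplicatively closed subset of $A$. Assuming for contradiction that $I\cap S=\emptyset$, Zorn's lemma furnishes an ideal $P$ containing $I$ and maximal with respect to being disjoint from $S$, and it is classical that any such $P$ must be prime. Since $I\ne 0$ and $1\in S$, this $P$ is a nonzero proper prime, so $P\in V(I)$. The hypothesis $V(I)\subseteq\bigcup_{x\in\Gamma}V(x)$ will then force $x\in P$ for some $x\in\Gamma$; but $x\in S$, contradicting $P\cap S=\emptyset$. Hence $I$ must meet $S$, i.e.\ $I$ contains a product of elements of $\Gamma$.

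For (i), I would fix a nonzero $a\in A$ and verify that the hypothesis of (ii) applies to $I=(a)$. Any prime $P\in V((a))$ contains the nonzero element $a$ and so is itself nonzero; the hypothesis of (i) then gives $P\cap\Gamma\ne\emptyset$, i.e.\ $P\in V(x)$ for some $x\in\Gamma$. Thus $V((a))\subseteq\bigcup_{x\in\Gamma}V(x)$, and (ii) produces a product of elements of $\Gamma$ inside $(a)$, i.e.\ a multiple of $a$ that is such a product.

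The only non-routine ingredient is the standard lemma that an ideal maximal among those disjoint from a fixed multiplicative set is prime; since this is a textbook fact from the proof of Kaplansky's original theorem, I do not anticipate any real obstacle in executing the above plan.
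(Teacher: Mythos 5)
Your proof of (ii) is exactly the paper's argument: assume $I$ misses the multiplicative set generated by $\Gamma$, expand $I$ to a prime disjoint from that set, and contradict the hypothesis. The deduction of (i) from (ii) applied to $I=(a)$ (with the observation that every prime containing a nonzero $a$ is nonzero) is routine and correct; the paper leaves (i) to the reader, so your write-up is simply a slightly fuller version of the same approach.
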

\begin{proof}
For reader's convenience we insert a proof of $(ii)$. If the conclusion  fails, then $I$ is disjoint of  the multiplicative set $S$ generated by $\Gamma$, so expanding $I$ to a prime ideal disjoint of $S$ we get a contradiction.
\end{proof}

Nagata's Theorem for UFDs \cite{N} states  that  a Noetherian domain $A$ is  a UFD provided there exists a    multiplicative set $S\subset A$ generated by prime elements such that the fraction ring $A_S$ is a UFD. 
Subsequently, a theorem of this form  was called    a {\em Nagata-type theorem}.   For examples of Nagata-type theorems  see
\cite[Section 3]{AAZ} and its reference list.

Let $A$ be a domain. A nonzero nonunit   $x\in A$ is called an {\em irreducible} element  (aka {\em atom}) if $x$ is not a product of two nonunits of $A$. According to  \cite{C}, $A$ is  said to be {\em  atomic }  if each nonzero nonunit $x$ of $A$ is a product of atoms (i.e. $x$ has an atomic factorization). If $A$ is atomic and for each nonzero nonunit $x\in A$  all  atomic factorizations of $x$  have the same  length (i.e. number of factors), then  $A$ is called a {\em half-factorial domain (HFD)}, cf. \cite{Z}. The result which triggered the study of HFDs was Carlitz's Theorem \cite{Ca} which states that a ring of algebraic integers is an HFD iff  
a product of  every two nonprincipal ideals of $A$ is   principal (i.e. 
the class group of $A$ has order $\leq 2$). 
For literature on HFDs see \cite{Co}, \cite{CC} and the references there.

The aim of this paper is to give  Kaplansky/Nagata-type theorems for HFDs inside the class of atomic domains (thus giving a partial answer to a question posed by Jim Coykendall in a recent conference talk).
After introducing our "good/bad atoms" concept   (Definition \ref{8})
we give: a Kaplansky-type theorem for HFDs  (Theorem \ref{10}), some  facts about good/bad atoms (Remark \ref{5}, Proposition \ref{4}, Example \ref{9}), a Nagata-type theorem for HFD's (Theorem \ref{7}  with  its polynomial consequence Corollary \ref{11}) and 
a possible extension of these ideas to $r$-congruence half-factorial domains (Theorems \ref{12}, \ref{13} and  Corollary \ref{14})). We use   books \cite{G} and \cite{K} as general reference for basic results and terminology.

We introduce the key concept of this paper.

\begin{definition} \label{8}
Let $A$ be an atomic domain. 
By an {\em atomic equality} we mean 
an equality  
$$(\sharp) \hspace{3cm} a_1\cdots a_n = b_1\cdots b_m \hspace{5cm} $$  whose factors are atoms in $A$. 

$\bullet$ Call   atomic equality $(\sharp)$ {\em balanced} (resp. {\em unbalanced}) if $n=m$ (resp. $n\neq m$).

$\bullet$ Call   atomic equality $(\sharp)$ {\em irredundant} if  no
proper subproduct of the $a$'s  is associated  to a proper subproduct of the $b$'s, that is, $\prod_{i\in I} a_i$ is not  associated to $\prod_{j\in J} b_j$ whenever 
$I\subset \{1,...,n\}$, $J\subset \{1,...,m\}$ are nonempty sets.
Thus  a {\em redundant} (i.e. non-irredundant) atomic equality  splits essentially (i.e. up to unit multiplication) into two smaller atomic equalities.

$\bullet$ Designate  the atoms appearing in irredundant unbalanced atomic equalities  as {\em bad atoms} and call
the others  {\em good atoms}.
\end{definition}

%So an atomic domain  is an HFD iff all atomic equalities are balanced.

\begin{remark} \label{5}
Let $A$ be an atomic domain.

$(i)$ A prime atom  is clearly good. There exist nonprime good atoms as seen in  Example \ref{9}.

$(ii)$ $A$ is an HFD iff its atoms    are   good, because  an unbalanced atomic equality may be shrunk to an irredundant unbalanced  one.

$(iii)$ As $3^4=(5+ 2\sqrt{-14})(5- 2\sqrt{-14})$ is an irredundant unbalanced atomic equality in $\mathbb{Z}[\sqrt{-14}]$, the factors are bad atoms.  In fact all nonprime atoms of $\mathbb{Z}[\sqrt{-14}]$ are bad, cf. Proposition \ref{4}.

$(iv)$ A good atom  $a\in A$ may be  bad in $A[X]$. 
For instance, $\mathbb{Z}[\sqrt{-3}]$  is an HFD (cf. \cite{Z}) and but  $2$ is a bad atom of $\mathbb{Z}[\sqrt{-3}][X]$ due to atomic equality
$$2^2(X^2+X+1)=(2X+1+\sqrt{-3})(2X+1-\sqrt{-3}).$$

\end{remark}

\vspace{2mm}
 We give a Kaplansky-type theorem for the half factorial domains. Let $A$ be an atomic domain.
 Say that  a nonzero nonunit   $x\in A$   is an {\em HF-element} if all atomic factorizations of $x$ have the same length. So $A$ is an HFD iff its nonzero nonunits are HF-elements.

\begin{theorem} \label{10}
An atomic domain $A$  is an HFD iff every nonzero prime ideal of $A$ contains a good atom.
\end{theorem}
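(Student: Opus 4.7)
For the forward direction, I would argue directly: if $A$ is an HFD, Remark~\ref{5}(ii) says every atom of $A$ is good; any nonzero prime ideal $P$ contains a nonzero nonunit, hence (by primality applied to an atomic factorization of it) contains an atom, which is automatically good. So the real content lies in the converse.

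For the converse, my plan is to start by invoking Theorem~\ref{6}(ii) with $\Gamma$ equal to the set of good atoms of $A$. The hypothesis that every nonzero prime meets $\Gamma$ gives, for any nonzero ideal $I$, the inclusion $V(I)\subseteq\bigcup_{g\in\Gamma}V(g)$ (since every prime in $V(I)$ is nonzero). Theorem~\ref{6}(ii) then yields that every nonzero element of $A$ divides a product of good atoms.

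Next I would take a nonzero nonunit $x$ with two atomic factorizations $x=a_1\cdots a_n=b_1\cdots b_m$, and pick good atoms $g_1,\ldots,g_k$ with $xz=g_1\cdots g_k$. Atomically factoring $z=c_1\cdots c_p$ (with $p=0$ if $z$ is a unit, absorbing the resulting unit into some $a_i$ and into some $b_j$) produces two atomic equalities
$$g_1\cdots g_k=a_1\cdots a_n c_1\cdots c_p=b_1\cdots b_m c_1\cdots c_p.$$
Assuming $n\neq m$, the integers $n+p$ and $m+p$ cannot both equal $k$, so at least one of these equalities is unbalanced; say the first. Shrinking it via Remark~\ref{5}(ii) to an irredundant unbalanced atomic subequality should deliver the contradiction.

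The main obstacle is verifying that this shrinking never completely eliminates the $g$'s. Each redundant splitting partitions the $g$-indices $\{1,\ldots,k\}$ into a nonempty proper subset $I$ and its nonempty complement, so both resulting subequalities retain some $g_i$ on the $g$-side; iterating, at least one $g_i$ stays in play. The terminal irredundant unbalanced subequality therefore contains some $g_i$ among its factors, which by Definition~\ref{8} forces $g_i$ to be a bad atom---contradicting our choice of good atoms. This contradiction forces $n=m$, proving that $A$ is an HFD.
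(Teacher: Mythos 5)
Your proof is correct and follows essentially the same route as the paper's: Theorem~\ref{6} supplies a product of good atoms divisible by $x$, and the impossibility of an irredundant unbalanced atomic equality that still carries a good atom yields the contradiction. The paper merely packages this differently, introducing the multiplicative set $S$ generated by the good atoms, noting every element of $S$ is an HF-element, and then observing that any divisor of an HF-element is an HF-element; your version inlines that bookkeeping.
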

\begin{proof}
$(\Rightarrow)$ Every nonzero prime ideal contains some atom which is good since $A$ is an HFD.
$(\Leftarrow)$\  
Denote by  $S$   the multiplicative set generated by all good atoms.
Note that every  $s\in S$ is an HF-element.  
Indeed, a hypothetic  unbalanced atomic equality  $a_1\cdots a_n = b_1\cdots b_m$   with $a_1,...,a_n$ good atoms can be shrunk to an irredundant one of the same kind,  a contradiction.
If  $x$ is a nonzero nonunit of $A$, then part $(i)$ of Theorem \ref{6} shows that $x$ divides 
some  $y\in S$, so $x$ is an HF-element because $y$ is so (as noted  above). Thus $A$ is an HFD.
\end{proof}

Combining  the  proof above  with part $(ii)$ of Theorem \ref{6} we get:

\begin{corollary}
If  $x$ a nonzero nonunit  of   an atomic domain  $A$ and
  every prime ideal containing $x$ contains a good atom, 
then $x$   %divides a product of good atoms, so it 
is an HF-element.
\end{corollary}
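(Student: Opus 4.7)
The plan is to instantiate Theorem \ref{6}(ii) with $\Gamma$ equal to the set of good atoms of $A$ and $I=(x)$. The hypothesis of the corollary states exactly that every prime containing $x$ meets $\Gamma$, i.e.\ $V((x))\subseteq \bigcup_{g\in\Gamma} V(g)$, so Theorem \ref{6}(ii) delivers a product $y=g_1\cdots g_k$ of good atoms with $y\in(x)$. In particular $x$ divides $y$.

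Next I would quote the observation already established in the proof of Theorem \ref{10}: any product of good atoms is an HF-element, since an unbalanced atomic equality involving only good atoms could be shrunk to an irredundant unbalanced one, contradicting the definition of ``good''. Hence $y$ is an HF-element.

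The only remaining step, and the one piece of genuine content that is not a straight citation, is the lemma that a nonzero nonunit divisor of an HF-element is itself an HF-element. I would prove this directly: write $y=xz$ and suppose for contradiction that $x$ admits two atomic factorizations $x=a_1\cdots a_n=a'_1\cdots a'_{n'}$ with $n\neq n'$ (such factorizations exist because $A$ is atomic). If $z$ is a unit $u$, then $y=(ua_1)a_2\cdots a_n=(ua'_1)a'_2\cdots a'_{n'}$ are atomic factorizations of $y$ of lengths $n$ and $n'$; if $z$ is a nonunit with atomic factorization $z=c_1\cdots c_\ell$, then appending the $c_i$'s to both factorizations of $x$ produces atomic factorizations of $y$ of lengths $n+\ell\neq n'+\ell$. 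Either way the HF-property of $y$ is violated, so $x$ must be an HF-element. This completes the proof.

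I do not expect any real obstacle: the work is bundled into Theorem \ref{6}(ii) and the already-proved shrinking argument, and the divisibility lemma is a short case split. The only thing worth double-checking is the unit case of the divisibility lemma, to ensure associates of atoms are still counted as atoms in the length bookkeeping.
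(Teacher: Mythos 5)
Your proof is correct and follows essentially the same route the paper intends: apply Theorem~\ref{6}(ii) with $\Gamma$ the set of good atoms to get a product of good atoms in $(x)$, invoke the observation from the proof of Theorem~\ref{10} that such a product is an HF-element, and conclude via the fact that a divisor of an HF-element is an HF-element. You merely make explicit (with the unit/nonunit case split) the divisibility lemma that the paper uses implicitly, which is a reasonable bit of added care.
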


\begin{corollary}
In a  quasilocal one-dimensional atomic domain, all atoms are good or all atoms are bad.
\end{corollary}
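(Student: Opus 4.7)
The plan is to reduce the dichotomy to a one-line application of Theorem \ref{10}. Let $A$ be quasilocal one-dimensional atomic, with unique maximal ideal $\mathfrak{m}$. Since $\dim A = 1$, the only nonzero prime ideal of $A$ is $\mathfrak{m}$. Every atom of $A$ is a nonunit and therefore lies in $\mathfrak{m}$.

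The key observation is the following: suppose $A$ possesses at least one good atom $a$. Then $a \in \mathfrak{m}$, so the unique nonzero prime of $A$ contains a good atom. Theorem \ref{10} then forces $A$ to be an HFD, and by Remark \ref{5}(ii) this means every atom of $A$ is good. Contrapositively, if some atom is bad then no atom can be good, i.e.\ every atom is bad. This is exactly the stated dichotomy.

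There is essentially no obstacle here beyond invoking the characterization in Theorem \ref{10}; the one-dimensional quasilocal hypothesis collapses the hypothesis ``every nonzero prime ideal contains a good atom'' to the single condition ``$\mathfrak{m}$ contains a good atom,'' which is automatic as soon as one good atom exists at all.
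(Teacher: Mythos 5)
Your proof is correct and follows exactly the route the paper intends: the corollary is stated without proof immediately after Theorem \ref{10}, precisely because the one-dimensional quasilocal hypothesis makes $\mathfrak{m}$ the unique nonzero prime, so one good atom suffices to invoke Theorem \ref{10} and then Remark \ref{5}(ii). Nothing is missing.
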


The next result shows that   a ring of algebraic integers  having a good nonprime  atom  is an HFD.

\begin{proposition}\label{4}
For a  Krull domain  $A$, consider the  following assertions:

$(i)$ The divisor class group of $A$ has order $\leq 2$.

$(ii)$ $A$ is   an HFD.

$(iii)$ $A$ is a UFD or it has at least a  good nonprime  atom.
\\
Then $(i)$ $\Rightarrow$ $(ii)$  $\Rightarrow$ $(iii)$.
If  each divisor class of $A$ contains a height-one prime ideal 
(e.g. $A$ is a ring of algebraic integers, cf. \cite{Na}), then  all three assertions are equivalent.
\end{proposition}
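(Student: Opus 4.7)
The plan is to prove the three implications separately, with $(iii)\Rightarrow(i)$ (the direction that invokes the additional hypothesis) being the substantial one. For $(i)\Rightarrow(ii)$ I would use the unique divisorial factorization $(x)=\prod_i P_i^{e_i}$ into height-one primes available in any Krull domain: when $|\mathrm{Cl}(A)|\le 2$, any two non-principal height-one primes have principal product, so every non-prime atom $a$ must satisfy $(a)=P_1P_2$ with both $P_i$ in the non-trivial class (a longer divisorial decomposition of an atom would contain a principal proper subproduct and contradict irreducibility). Consequently, in an atomic factorization $x=a_1\cdots a_n$ the prime atoms correspond one-to-one (with multiplicity) to the principal height-one primes dividing $(x)$ and the non-prime atoms two-to-one to the non-principal ones; both counts are invariants of $x$, so $n$ is determined by $x$.

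For $(ii)\Rightarrow(iii)$ I would invoke Remark \ref{5}(ii), which says that every atom of an HFD is good. If $A$ is not a UFD, then $A$ has a non-prime atom (an atomic domain with only prime atoms is a UFD), and this atom is good, giving the second alternative in (iii).

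For $(iii)\Rightarrow(i)$ under the extra hypothesis I would argue contrapositively: assume $|\mathrm{Cl}(A)|\ge 3$, whence $A$ is not a UFD, and show that no non-prime atom can be good. Fix a non-prime atom $a$ with divisorial factorization $(a)=P_1\cdots P_k$ ($k\ge 2$, no principal proper subproduct), and set $g_i=[P_i]$, a minimal zero-sum sequence of non-zero classes. If $k\ge 3$, the hypothesis lets me pick height-one primes $Q_i$ in classes $-g_i$; letting $a'$ be the atom with $(a')=Q_1\cdots Q_k$ and $c_i$ the atom of divisor $P_iQ_i$, the equation $a\cdot a' = u\,c_1\cdots c_k$ is unbalanced (lengths $2$ vs $k$) and irredundant, as I would verify by comparing multisets of height-one primes in the divisorial ideals of proper subproducts. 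If $k=2$ and the common order $d$ of $g_1,g_2$ satisfies $d\ge 3$, then $a^d = u\,\pi_1\pi_2$ with $\pi_i$ the atom generating $P_i^d$ is unbalanced ($d$ vs $2$) and irredundant, since powers $a^j$ with $0<j<d$ have divisor $P_1^jP_2^j$ distinct from $P_1^d$ and $P_2^d$. The remaining case $k=2$ with both $g_i$ of order $2$ uses the hypothesis most strongly: pick $h\in\mathrm{Cl}(A)\setminus\{0,g\}$, which exists because $|\mathrm{Cl}(A)|\ge 3$, and assemble an entangling equation using primes in classes $h,-h,g+h,\ldots$, for example $\beta_1\beta_2 = u\,a\,\alpha_h\alpha_k$ (with $\alpha$'s and $\beta$'s atoms of divisor shapes $(h,h),(k,k),(g,h,k)$) when $h$ has order $2$, or an analogue built from the atoms $Q^n, (Q')^n, QQ'$ when $h$ has order $n\ge 3$.

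The main obstacle is this last case ($k=2$ with $g_1=g_2$ of order $2$): the obvious equation $a^2 = u\pi_1\pi_2$ is balanced, so one must engineer auxiliary atoms that genuinely entangle $a$ with primes in new classes to produce an irredundant unbalanced equality, and the specific construction varies with the substructure of $\mathrm{Cl}(A)$. A secondary technicality is that the hypothesis only guarantees at least one height-one prime per class; when a class has a unique prime that happens to coincide with a $P_i$, the auxiliary primes in the constructions may collide, but the irredundance argument still succeeds because it ultimately depends on the distinctness of the multisets of height-one primes in the divisorial ideals of the proper subproducts.
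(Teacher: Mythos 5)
Your overall strategy coincides with the paper's: handle $(i)\Rightarrow(ii)$ by the Carlitz--Zaks counting argument (the paper simply cites \cite{Z}, but your direct proof is correct), note that $(ii)\Rightarrow(iii)$ is immediate from Remark \ref{5}(ii), and prove $(iii)\Rightarrow(i)$ contrapositively by showing that if $|\mathrm{Cl}(A)|\ge 3$ then every nonprime atom $a$, with $aA=(P_1\cdots P_k)_v$, is bad, splitting on $k$. Your $k\ge 3$ construction is exactly the paper's and is fine. The problem is that you do not close the case $k=2$. Put $c=[P_1]$, so $[P_2]=-c$. Your subcase ``$c$ of order $d\ge 3$'' works ($a^d=u\pi_1\pi_2$ is an irredundant $d$-versus-$2$ equality, since $P_1\ne P_2$ there), and your subcase ``$c$ of order $2$ and auxiliary class $h$ of order $2$'' works. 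But in the remaining sub-subcase ($c$ of order $2$, $h$ of order $n\ge 3$) the proposed ``analogue built from the atoms $Q^n,(Q')^n,QQ'$'' is an irredundant unbalanced equality among those three atoms only: it does not contain $a$, so it shows those auxiliary atoms are bad but says nothing about $a$, which is what has to be proved. You flag this yourself (``the specific construction varies with the substructure of $\mathrm{Cl}(A)$''), so this is a genuine, acknowledged gap rather than an oversight, but as written the proof of $(iii)\Rightarrow(i)$ is incomplete.

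The gap is avoidable, and the paper's proof shows that no case split on orders is needed for $k=2$. Pick any class $d\notin\{0,c\}$ (possible since $|\mathrm{Cl}(A)|\ge 3$) and height-one primes $Q_1,Q_2,Q_3,Q_4$ in classes $-d$, $d-c$, $d$, $c-d$. The ideals $(P_1Q_1Q_2)_v$, $(P_2Q_3Q_4)_v$, $(Q_1Q_3)_v$, $(Q_2Q_4)_v$ are principal, generated by atoms $t_1,t_2,t_3,t_4$: atomicity holds because every proper partial sum of the relevant classes lies in $\{\pm c,\pm d,\pm(c-d)\}$, and all of these are nonzero. Then $at_3t_4A=(P_1P_2Q_1Q_2Q_3Q_4)_v=t_1t_2A$ is a $3$-versus-$2$ atomic equality, and irredundance follows by counting height-one primes with multiplicity in the divisorial factorizations of proper subproducts: every proper subproduct on the left involves $2$ or $4$ primes, while each of $t_1$, $t_2$ involves $3$. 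This argument is insensitive to the orders of $c$ and $d$ and to possible collisions among the chosen primes. In fact your order-$2$/order-$2$ construction is precisely this one specialized to $d=h$ with $-d=d$; the only missing observation was that it needs no hypothesis on the orders.
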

\begin{proof}
$(i)$ $\Rightarrow$ $(ii)$ is Zaks' \cite[Theorem 1.4]{Z} extending Carlitz's Theorem \cite{Ca} while
$(ii)$ $\Rightarrow$ $(iii)$ is clear.
To complete the proof, we show that
$(iii)$ implies $(i)$ provided each divisor class of $A$ contains a height-one prime ideal. Suppose that the divisor class group of $A$ has order $\geq 3$. We show that every   nonprime atom $x$ of $A$ is bad.
Write $xA$  as a $v$-product 
$xA=(P_1\cdots P_n)_v$, $n\geq 2$, of height-one prime ideals.  
For a divisorial ideal $I$ denote its divisor class by $[I]$.
Suppose that $n\geq 3$. Pick a height-one prime ideal $Q_i$ inside $-[P_i]$ for $i=1,...,n$. We obtain the principal ideals generated by atoms: $(Q_1\cdots Q_n)_v=zA$ and $(P_iQ_i)_v=y_iA$, $i=1,...,n$. So $x$ is a bad atom  because 
$$ xzA = (P_1\cdots P_nQ_1\cdots Q_n)_v = y_1\cdots y_nA.
$$
Finally suppose that $xA=(P_1P_2)_v$. 
Let $c=[P_1]$ and pick $d$ a nonzero divisor class distinct from $c$ (a possible choice since     $A$ has at least three divisor classes).
Pick   prime ideals $Q_1$, $Q_2$, $Q_3$, $Q_4$  in  divisor classes
$-d$, $d-c$, $d$, $c-d$ respectively. We obtain the principal ideals generated by atoms: $(P_1Q_1Q_2)_v=a_1A$, $(P_2Q_3Q_4)_v=a_2A$, $(Q_1Q_3)_v=a_3A$, 
$(Q_2Q_4)_v=a_4A$.
Then $x$ is a bad atom  because 
$$ xa_3a_4A = (P_1P_2Q_1Q_2Q_3Q_4)_v  = a_1a_2A.
$$
\end{proof}

We exhibit a Dedekind non-HFD having  good nonprime atoms.

\begin{example} \label{9}
By Grams' theorem \cite[Corollary 1.5]{Gr}, there exists a Dedekind domain $A$ with class group $\mathbb{Z}/6\mathbb{Z}$  such that the   ideal classes which contain  maximal ideals are precisely $\widehat{2}$, $\widehat{3}$ and $\widehat{4}$. 
For the nonprime atoms $x$   of $A$ we have the following types (where each $P_i$ is a  height-one prime ideal):

$(1)$  $xA=P_1P_2$ with $P_1,P_2\in \widehat{3}$,

$(2)$  $xA=P_1P_2P_3$ with $P_1,P_2,P_3\in \widehat{2}$,

$(3)$  $xA=P_1P_2P_3$ with $P_1,P_2,P_3\in \widehat{4}$,

$(4)$  $xA=P_1P_2$ with $P_1\in \widehat{2}$ and $P_2\in \widehat{4}$.
\\
Let $ab=a'b'$ where $a$, $a'$ (resp. $b$, $b'$) are products of type one  (resp. type $\geq 2$) atoms. As $A$ has unique prime ideal factorization, we get $aA=a'A$ and $bA=b'A$. It   easily follows  that   type one atoms  are good while the others are bad (e.g. a type two atom times a type three one is a product a three type four atoms).
\end{example}

 We give a Nagata-type theorem for HFDs (compare to \cite[Theorem 3.3]{AAZ}).

\begin{theorem} \label{7}
Let $A$ be an atomic domain and $S\subseteq A$ a saturated multiplicative set
whose elements are products of good atoms.
If $A_S$ is an HFD, then so is $A$.
\end{theorem}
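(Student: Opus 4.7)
The plan is to apply Theorem~\ref{10} to $A$: it suffices to show that every nonzero prime ideal $P$ of $A$ contains a good atom. If $P \cap S \ne \emptyset$, I pick $s \in P \cap S$, write $s$ as a product of good atoms of $A$ (by hypothesis on $S$), and use primality of $P$ to locate one of these good atoms inside $P$.

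The substantive case is $P \cap S = \emptyset$. Then $PA_S$ is a nonzero prime ideal of $A_S$; since $A_S$ is an HFD, every atom of $A_S$ is good in $A_S$ (Remark~\ref{5}(ii)), and applying Theorem~\ref{10} inside $A_S$ produces an atom $\alpha \in PA_S$. Writing $\alpha = p/s$ with $p \in A$, $s \in S$, the identity $PA_S \cap A = P$ (valid because $P \cap S = \emptyset$) gives $p \in P$, and $p$ is itself an atom of $A_S$ (associate to $\alpha$ there). Factoring $p$ atomically in $A$ as $p = c_1 \cdots c_k$, the atomicity of $p$ in $A_S$ combined with the fact that $S$ is saturated (so an atom of $A$ is a unit of $A_S$ iff it lies in $S$) forces exactly one factor $c$ outside $S$; the remaining factors lie in $S$ and are therefore good atoms of $A$ (any atom of $A$ belonging to $S$ must, by saturation, appear as a single factor in some good-atom factorization of an element of $S$). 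Primality of $P$ plus $S \cap P = \emptyset$ yields $c \in P$, and $c$ is an atom of $A_S$ (associate to $p$ in $A_S$).

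The crux is to prove that $c$ is a good atom of $A$. Suppose not, so $c$ appears in an irredundant unbalanced atomic equality $c\,d_2 \cdots d_N = e_1 \cdots e_M$ in $A$ with $N \ne M$. By Definition~\ref{8} every atom in this equality is bad, and since atoms in $S$ are good, none of the $d_i, e_j$ lies in $S$. I plan to factor each $d_i, e_j$ further in $A_S$ into atoms of $A_S$, lift each such $A_S$-atom to an atom of $A$ outside $S$ by the same procedure that produced $c$, and clear denominators; this produces an atomic equality in $A$ of the form $c \cdot T_L \cdot \widetilde D = T_R \cdot \widetilde E$, with $T_L, T_R \in S$ and $\widetilde D, \widetilde E$ products of lifted atoms. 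The HFD property of $A_S$ balances the $A_S$-atomic lengths on the two sides; combined with the observation that each auxiliary lifting relation must itself be balanced in $A$ (an unbalanced irredundant version would contradict the goodness of the $S$-atoms it involves), a careful length bookkeeping should give a length imbalance of exactly $N - M \ne 0$ in the displayed equality. Because the equality contains good atoms from $T_L$ and $T_R$, it cannot simultaneously be irredundant and unbalanced; splitting off a redundant pair and iterating will eventually isolate an irredundant unbalanced subequality still involving a good atom, producing the required contradiction.

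The main obstacle I anticipate is the final splitting step: after each redundant reduction of $c \cdot T_L \cdot \widetilde D = T_R \cdot \widetilde E$, I must guarantee that some surviving irredundant unbalanced subequality still contains a good atom from $T_L$ or $T_R$, so that the conflict with Definition~\ref{8} actually materialises. I expect this to need a minimal-counterexample argument on the total atomic length of the assumed witness for $c$'s badness, together with a careful choice of lifts so that good factors cannot all migrate into the balanced pieces during the splits.
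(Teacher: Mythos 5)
Your reduction is sound as far as it goes: the case $P\cap S\neq\emptyset$ is handled correctly, and in the case $P\cap S=\emptyset$ you correctly produce an atom $c\in P$ with $c\notin S$ that remains an atom in $A_S$, so that everything hinges on showing that such a $c$ is a good atom of $A$. But that claim is not a small lemma on the way to the theorem --- it essentially \emph{is} the theorem (once every atom of $A$ outside $S$ that stays an atom in $A_S$ is known to be good, the pushing-into-$A_S$ device you describe makes every atom of $A$ good, so $A$ is an HFD by Remark~\ref{5}(ii)). And it is exactly at this claim that your argument stops being a proof: you pass from a hypothetical irredundant unbalanced equality containing $c$ to an unbalanced equality $c\cdot T_L\cdot\widetilde D=T_R\cdot\widetilde E$, and then concede that after splitting off redundant pieces you cannot guarantee that a surviving irredundant unbalanced subequality still contains one of the designated good atoms from $T_L,T_R$. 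That worry is legitimate: the property ``contains one of these particular good atoms'' is not preserved by the splitting, and a minimal-counterexample argument on total length does not rescue it as stated.

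The missing idea is to propagate a different invariant. Let $B$ be the set of good atoms lying in $S$ and $C$ the set of atoms of $A-S$ that remain atoms in $A_S$; your displayed equality has \emph{all} of its factors in $B\cup C$ (the $T$'s in $B$, the lifted factors in $C$), and it is this membership, not the presence of a specific good atom, that survives splitting (up to associates). For any unbalanced atomic equality whose factors all lie in $B\cup C$, passing to $A_S$ turns the $B$-factors into units and keeps the $C$-factors as atoms, so the HFD property of $A_S$ forces the numbers of $C$-factors on the two sides to agree (unless both sides consist entirely of $B$-factors); in either case the unbalancedness forces at least one factor to be a good atom from $B$, so the equality cannot be irredundant. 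It therefore splits into two strictly smaller atomic equalities, at least one unbalanced and still with all factors in $B\cup C$, and descent on the total number of factors gives the contradiction. This is precisely the engine of the paper's proof (its Claim $(*)$ together with the final induction); without it your argument does not terminate. As a side remark, the detour through Theorem~\ref{10} is logically admissible (that theorem is proved independently earlier) but buys nothing, since the invariant above already yields the HFD property of $A$ directly.
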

\begin{proof}
 Let $B$ be the set of  good atoms in $S$ and $C$ the set of  atoms in $A-S$ which are also atoms  in $A_S$.
We prove first the following claim.

{\em $(*)$\ \   Every   atom    $g\in A-(B\cup C)$ may be inserted into a balanced atomic equality
$$ a_1a_2\cdots a_n g = f_1f_2\cdots f_{n+1}
$$
where the $a$'s are  in $B$ and the $f$'s are in $B\cup C$.} 

Perform the following  operations: $(1)$ consider an atomic factorization of $g$ in $A_S$,  $(2)$  multiply it by  atoms in $B$ in order to push all  factors in $A$ and  $(3)$ write atomic factorizations for all resulting factors in $A-S$.
Note that each $b\in A$  which is an atom in $A_S$  can be written as a product of atoms from $B$ times an atom from $C$. 
 We thus achieve an atomic equality 
$a_1a_2\cdots a_n g = f_1f_2\cdots f_{m}
$
where the $a$'s are in $B$ and the $f$'s are in $B\cup C$. Since the $a$'s are good atoms and $g\notin B\cup C$, we may arrange that $m=n+1$, thus proving the claim. Now we prove that $A$ is an HFD.  Suppose, by contrary,  there exists an unbalanced atomic equality  
$$(**) \hspace{3cm} g_1g_2\cdots g_j=h_1h_2\cdots h_k. \hspace{4cm} $$
 Using Claim $(*)$, we may assume that all factors in $(**)$ are from $B\cup C$. Since $A_S$ is an HFD, at least one of the factors in $(**)$ is a (good) atom from $B$. Then   $(**)$ is  a redundant atomic equality,  
so it splits essentially  into two atomic equalities with fewer factors, at least one of them being unbalanced. An inductive argument provides the desired contradiction.  
\end{proof}

In general, a polynomial extension of a HFD is not a HFD (see
 part $(iv)$ of Remark \ref{5}).

\begin{corollary} \label{11}
If $D$ is an HFD  whose atoms are good in $D[X]$, then $D[X]$ is an HFD.
\end{corollary}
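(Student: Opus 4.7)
The plan is to apply the Nagata-type Theorem~\ref{7} to the atomic domain $A = D[X]$ with multiplicative set $S = D \setminus \{0\}$. Note that the very phrase ``atoms of $D$ are good in $D[X]$'' presupposes that $D[X]$ is atomic, so Theorem~\ref{7} is available.

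First I would verify that $S = D \setminus \{0\}$ is saturated in $D[X]$: if $fg \in D \setminus \{0\}$ with $f,g \in D[X]$, then $\deg f + \deg g = 0$ forces $f,g \in D$ and both nonzero. The same degree count shows that every atom of $D$ remains an atom of $D[X]$, for a factorization $a = fg$ in $D[X]$ of an element $a \in D$ must have $f,g \in D$, and irreducibility in $D$ then makes one of them a unit. By hypothesis these atoms are good in $D[X]$. Since $D$ is an HFD (hence atomic), every nonunit in $S$ is a product of atoms of $D$, each of which is a good atom of $D[X]$; units in $S$ are simply the empty product. Thus every element of $S$ is a product of good atoms of $D[X]$.

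Next I would identify the localization: $D[X]_S = K[X]$ where $K$ is the quotient field of $D$. Since $K[X]$ is a PID it is a UFD, and in particular an HFD. Theorem~\ref{7} now applies and yields that $D[X]$ is an HFD.

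There is no real obstacle; the argument is essentially a direct invocation of Theorem~\ref{7}. The only points that require a moment's attention are the saturation of $D\setminus\{0\}$ in $D[X]$ and the fact that atoms of $D$ remain atoms in $D[X]$, both of which reduce to a one-line degree argument.
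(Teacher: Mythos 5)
Your proof is correct and takes exactly the paper's route: apply Theorem~\ref{7} to $A=D[X]$ with $S=D\setminus\{0\}$, observing that $D[X]_S=K[X]$ is a UFD and hence an HFD. The only (minor) difference is that the paper asserts atomicity of $D[X]$ outright as an easy known fact (an HFD satisfies ACCP, hence so does $D[X]$, hence $D[X]$ is atomic) rather than reading it off the hypothesis as you do; your explicit degree-count verifications of saturation and of atoms of $D$ remaining atoms in $D[X]$ are correct and merely fill in details the paper leaves implicit.
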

\begin{proof}
It is well-known and anyway easy to see that $D[X]$ is atomic.
Apply Theorem \ref{7} for $A=D[X]$ and $S=D-\{0\}$.
\end{proof}

\begin{remark} 
The nontrivial implication of Theorem \ref{10} is a consequence of  Theorem  \ref{7}. Indeed, let $A$ be an atomic and $S$   the   multiplicative set genereted by all good atoms. If 
all nonzero prime ideals of $A$ cut $S$, then $A_S$ is the quotient field of $A$,  so $A$ is an HFD by Theorem  \ref{7}.
\end{remark}

In \cite[Theorem 2.10]{CS}, Coykendall and   Smith proved that any {\em other half factorial domain (OHFD)} is a UFD, where an OHFD is an atomic domain in which any two distinct factorizations of the same nonzero nonunit element have different lengths (see also \cite{{CCGS}}).
As another application of Theorem \ref{7}, we give an alternative (but more complicated) proof of that result.

\begin{theorem} {\em (\cite[Theorem 2.10]{CS})}
Any OHFD is an UFD.
\end{theorem}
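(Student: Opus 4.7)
The plan is to apply Theorem~\ref{7} with $S$ the saturated multiplicative set generated by the good atoms of $A$. The first step is to show that in an OHFD every good atom is prime (the converse is Remark~\ref{5}$(i)$). Given a non-prime atom $a$, pick $b,c$ with $a\mid bc$, $a\nmid b$, $a\nmid c$, write $az=bc$, and factor everything atomically to obtain
\[a z_{1}\cdots z_{r}=b_{1}\cdots b_{p}c_{1}\cdots c_{q}.\]
A balanced version of this would, by OHFD, force equal multisets and hence $a$ associate to some $b_{i}$ or $c_{j}$, violating $a\nmid b,c$; so it is unbalanced. Tracking $a$ through a reduction to an irredundant unbalanced sub-equality: any matched sub-piece containing $a$ must itself be unbalanced, since a balanced one would again place $a$ associate to a $b_{i}$ or $c_{j}$. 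Hence $a$ survives into an irredundant unbalanced equality and is bad.

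With good atoms equal to primes, $S$ is saturated and its elements are products of good atoms, so the hypotheses of Theorem~\ref{7} on $S$ are met; it remains to show $A_{S}$ is an HFD. The atoms of $A_{S}$ are precisely the bad atoms of $A$: good atoms become units in $A_{S}$, while a bad atom cannot factor as a product of two nonunits in $A_{S}$ (clearing denominators and applying the same OHFD dichotomy yields a contradiction). Since $A\hookrightarrow A_{S}$, any atomic equality $\beta_{1}\cdots \beta_{r}=\gamma_{1}\cdots \gamma_{s}$ in $A_{S}$ among bad atoms is already an equality in $A$. If $r=s$, OHFD yields matching multisets in $A$, so $\{\beta_{i}\}=\{\gamma_{j}\}$ and the factorizations coincide. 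The case $r\neq s$ produces an irredundant unbalanced equality in $A$ consisting solely of bad atoms, which must be ruled out by a more delicate argument---plausibly a minimal counterexample coupled with the characterization of good atoms as primes to force a prime into a position that OHFD forbids.

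Once $A_{S}$ is an HFD, Theorem~\ref{7} concludes that $A$ is an HFD. Combined with the OHFD hypothesis, every element of $A$ admits a unique atomic factorization---HFD yields a single possible length and OHFD forces uniqueness among same-length factorizations---so $A$ is a UFD. The main obstacle is the unbalanced case of the HFD verification for $A_{S}$: the OHFD axiom does not \emph{a priori} preclude irredundant unbalanced equalities of bad atoms, and extracting a contradiction there requires more than the basic OHFD dichotomy used elsewhere in the argument.
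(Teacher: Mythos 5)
Your reduction is the same as the paper's: you show that in an OHFD every good atom is prime (your argument for this---a balanced irredundant sub-equality containing a nonprime atom $a$ would, by the OHFD dichotomy, make $a$ associate to a factor of $b$ or $c$---is correct and fills in a step the paper only asserts), you let $S$ be the multiplicative set they generate, and you aim to apply Theorem~\ref{7} after showing $A_S$ is an HFD. But the step you flag as ``requiring a more delicate argument'' is not a loose end; it is the entire content of the theorem. After localizing at the primes you are left with an OHFD all of whose atoms are bad, and you must show no such domain exists (beyond a field/UFD). The OHFD axiom genuinely does not preclude irredundant unbalanced equalities of bad atoms by any formal manipulation---Remark~\ref{5}$(iii)$ shows such equalities occur in real domains---so ``a minimal counterexample coupled with the characterization of good atoms as primes'' has no visible mechanism to produce a contradiction. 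As written, the proposal proves only the easy reduction and leaves the theorem open.

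The paper closes this gap with heavy external machinery rather than combinatorics of factorizations: by \cite[Proposition 2.3]{CS}, an OHFD that is not a UFD is a Cohen--Kaplansky domain (finitely many nonassociate atoms); by \cite[Theorem 7]{CK} and \cite[Theorem 6.1]{AM} one reduces to the quasilocal case; there the integral closure $\bar A$ is a DVR and $A\subseteq\bar A$ is a root extension \cite[Theorem 4.3]{AM}, which for two nonassociate atoms $a,b$ yields $a^n$ associate to $b^m$ with $n\neq m$, and \cite[Proposition 2.3]{CS} then forces $a,b$ to be the only atoms, contradicting the structure theory of \cite{CK}. If you want to complete your argument, you need some substitute for this structural input; the factorization-theoretic framework of good/bad atoms alone will not deliver it.
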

\begin{proof}
Suppose there exists   an OHFD $A$ which is not an UFD. Note in an OHFD 
all good atoms  are  prime and let $S$ be the multiplicative set 
they generate.
%generated by all prime elements. 
By \cite[Corollary 2.2]{AAZ}, $A_S$ is atomic  and it is easy to see that $A_S$ is still an OHFD.
By  Theorem \ref{7}, we may replace $A$ by $A_S$ and thus assume  that   $A$ has only bad atoms.  By \cite[Proposition 2.3]{CS}, $A$ is a Cohen-Kaplansky domain, i.e. it has finitely many nonassociate atoms. 
By  \cite[Theorem 7]{CK} and  \cite[Theorem 6.1]{AM}
we may assume that $A$ is quasilocal. Let $a$, $b$ be two distinct atoms of $A$. As the integral closure $\bar{A}$ of $A$ is a DVR and $A\subseteq \bar{A}$ is a root extension (cf. \cite[Theorem 4.3]{AM}), a short computation shows that $a^n$ is an associate of $b^m$ for some $n\neq m$. By \cite[Proposition 2.3]{CS}, $a$ and $b$ are the only atoms of $A$, a contradiction cf. \cite[Introduction]{CK}.
\end{proof}

Let $A$ be an atomic domain and $r>1$ an integer. Following \cite{CS1} 
we say that $A$ is  an {\em $r$-congruence half-factorial domain ($r$-CHFD)} if for each nonzero nonunit $x$ of $A$, the lengths of all  atomic factorizations of $x$ are congruent modulo $r$. 
Update Definition \ref{8} as follows.

\begin{definition}  
Let $A$ be an atomic domain, $r>1$ an integer and  
$$(\sharp) \hspace{3cm} a_1\cdots a_n = b_1\cdots b_m \hspace{5cm} $$    an   atomic equality. 

$\bullet$ Say that   atomic equality $(\sharp)$ is {\em $r$-balanced} if $n\equiv_r m$ (otherwise call it {\em $r$-unbalanced}).

$\bullet$ Designate  the atoms appearing in irredundant $r$-unbalanced atomic equalities  as {\em $r$-bad atoms} and call
the others  {\em $r$-good atoms}.
\end{definition}

We invite the reader to adapt the material above in order to prove the following three results.

\begin{theorem} \label{12}
Let $A$ be an atomic domain and $r>1$ an integer. Then $A$ is an $r$-CHFD iff every nonzero prime ideal of $A$ contains an $r$-good atom.
\end{theorem}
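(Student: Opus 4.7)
The plan is to mirror the proof of Theorem \ref{10} almost verbatim, substituting the $r$-congruence versions of the relevant notions. Introduce the auxiliary notion of an $r$-HF-element: a nonzero nonunit $x\in A$ all of whose atomic factorizations have lengths congruent modulo $r$. Thus $A$ is an $r$-CHFD iff every nonzero nonunit is an $r$-HF-element.

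For the $(\Rightarrow)$ direction, if $A$ is an $r$-CHFD then every atomic equality $a_1\cdots a_n=b_1\cdots b_m$ in $A$ automatically has $n\equiv_r m$, so every atom of $A$ is $r$-good. A nonzero prime ideal of $A$ certainly contains some atom (since $A$ is atomic), and that atom is $r$-good.

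For $(\Leftarrow)$, let $S$ denote the multiplicative set generated by all $r$-good atoms. The first step is to observe that every element of $S$ is an $r$-HF-element. Indeed, suppose $a_1\cdots a_n=b_1\cdots b_m$ is $r$-unbalanced with every $a_i$ an $r$-good atom. By peeling off pairs of associated subproducts of the $a$'s and $b$'s one can shrink this to an irredundant atomic equality of the same sort; since the total lengths decrease by equal amounts on each side, the shrunken equality is still $r$-unbalanced, contradicting the $r$-goodness of the remaining $a_i$'s. Next, if $x$ is any nonzero nonunit of $A$, then by hypothesis every nonzero prime ideal contains an $r$-good atom, and Theorem \ref{6}(i) (applied to $\Gamma$ the set of $r$-good atoms) gives that $x$ divides some $y\in S$. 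Finally, a divisor of an $r$-HF-element is again an $r$-HF-element: if $y=xz$ and $x=a_1\cdots a_n=b_1\cdots b_m$ are two atomic factorizations of $x$, choose any atomic factorization $z=c_1\cdots c_p$; appending it yields two atomic factorizations of $y$ of lengths $n+p$ and $m+p$, so $n+p\equiv_r m+p$ forces $n\equiv_r m$. Hence $x$ is an $r$-HF-element, and $A$ is an $r$-CHFD.

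There is no genuine obstacle in the argument; it is a direct translation of Theorem \ref{10}. The only point worth stating carefully is the closure under divisors just noted, since unlike in the HFD case the cofactor's atomic factorization must be chosen once but only the common tail is needed, making the appeal to $\equiv_r$ robust.
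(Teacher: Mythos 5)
Your proposal is the adaptation of Theorem \ref{10} that the paper explicitly invites for Theorem \ref{12}, and its overall structure is right: the forward direction, the use of Theorem \ref{6}(i) with $\Gamma$ the set of $r$-good atoms, and the observation (worth spelling out, as you do) that a divisor of an $r$-HF-element is an $r$-HF-element, since appending a fixed atomic factorization of the cofactor shifts both lengths by the same $p$, which cancels mod $r$.

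There is, however, one genuinely false justification at the key step. When a redundant atomic equality is shrunk by removing an associated pair of proper subproducts $\prod_{i\in I}a_i\sim\prod_{j\in J}b_j$, the two sides do \emph{not} in general lose equally many factors: irredundancy only forbids associated subproducts, and nothing forces $|I|=|J|$ (a product of two atoms can perfectly well be associated to a product of three, as in Remark \ref{5}(iv)). So ``the total lengths decrease by equal amounts on each side'' is wrong, and an unlucky choice of which piece to discard can turn an $r$-unbalanced equality into an $r$-balanced one. The correct argument is additive mod $r$: the redundant equality splits essentially into two atomic equalities whose left-hand (resp.\ right-hand) lengths sum to $n$ (resp.\ $m$); if both pieces were $r$-balanced then $n\equiv_r m$, contrary to assumption, so at least one piece is $r$-unbalanced, and its left-hand factors are still among the $r$-good atoms $a_i$. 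Keep that piece and iterate; the process terminates in an irredundant $r$-unbalanced atomic equality whose left side consists of $r$-good atoms, the desired contradiction. With that one sentence replaced, your proof is complete and coincides with the paper's intended one.
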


\begin{theorem} \label{13}
Let $A$ be an atomic domain, $r>1$ an integer and $S\subseteq A$ a saturated multiplicative set
whose elements are products of $r$-good atoms.
If $A_S$ is an $r$-CHFD, then so is $A$.
\end{theorem}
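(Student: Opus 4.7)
The plan is to mimic the proof of Theorem~\ref{7} verbatim, replacing ``balanced/unbalanced'' by ``$r$-balanced/$r$-unbalanced'' throughout (and reading ``$r$-good/$r$-bad'' for ``good/bad''). Set $B$ to be the $r$-good atoms lying in $S$ and $C$ to be the atoms of $A-S$ that remain atoms in $A_S$. Because $S$ is saturated and each of its elements is a product of $r$-good atoms, every atom of $A$ that lies in $S$ is already in $B$; and exactly as in Theorem~\ref{7}, each element of $A$ which is an atom of $A_S$ factors in $A$ as a product of elements of $B$ times a single element of $C$.

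The first step is to prove the $r$-analog of claim $(*)$: every atom $g\in A\setminus(B\cup C)$ fits into an atomic equality
$$a_1\cdots a_n\, g = f_1\cdots f_m, \qquad a_i\in B,\; f_j\in B\cup C,\; n+1\equiv m\pmod{r}.$$
The construction (factor $g$ in $A_S$, multiply by elements of $B$ to clear denominators, then factor the resulting elements of $A$) is copied from Theorem~\ref{7}. To enforce the congruence, iteratively split off irredundant subequalities disjoint from $g$: each such subequality involves only $r$-good atoms on its left side, so $r$-goodness forces it to be $r$-balanced. When no further splitting is possible, the remaining irredundant piece contains $g$; if it were $r$-unbalanced, $r$-goodness would again force it to have no $a$'s, reducing it to $g=f'$ and placing $g$ into $B\cup C$---a contradiction.

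Next, argue by contradiction from an atomic equality $g_1\cdots g_j=h_1\cdots h_k$ in $A$ with $j\not\equiv k\pmod{r}$. Each factor not in $B\cup C$ is replaced via claim $(*)$; since each inserted subequality is $r$-balanced, every substitution preserves the residue of the length-difference modulo $r$. After finitely many steps every factor lies in $B\cup C$. Passing to $A_S$, the $B$-factors become units and the $C$-factors remain atoms, so the $r$-CHFD hypothesis on $A_S$ forces the numbers of $C$-factors on the two sides to be congruent modulo $r$. Combined with $j\not\equiv k\pmod{r}$, this forces the numbers of $B$-factors to be incongruent modulo $r$, so the equality contains at least one factor from $B$.

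That $B$-factor is $r$-good, so the $r$-unbalanced equality is redundant; splitting it essentially produces two shorter atomic equalities whose length-differences modulo $r$ sum to the nonzero residue $j-k\pmod{r}$, hence at least one piece is again $r$-unbalanced. Induction on $j+k$ closes the argument. The main technical obstacle I anticipate is the careful bookkeeping in step one: verifying that the iterative splitting can always be arranged so that every piece disjoint from $g$ is genuinely $r$-balanced, and that the process terminates with an $r$-balanced irredundant piece containing $g$. Once this is settled, steps two and three are a direct translation of the argument for Theorem~\ref{7}.
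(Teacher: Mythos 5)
Your proposal is correct and is precisely the adaptation of the proof of Theorem~\ref{7} that the paper intends (the paper leaves Theorem~\ref{13} to the reader as a direct translation of that argument). Your extra bookkeeping --- checking that the split-off irredundant pieces are $r$-balanced so that residues of length-differences modulo $r$ are preserved at every substitution and splitting step --- is exactly the verification needed, and it goes through.
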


\begin{corollary} \label{14}
If $D$ is an $r$-CHFD  whose atoms are $r$-good in $D[X]$, then $D[X]$ is an $r$-CHFD, provided $D[X]$ is atomic.
\end{corollary}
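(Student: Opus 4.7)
My plan is to reduce Corollary \ref{14} to Theorem \ref{13} exactly as Corollary \ref{11} was reduced to Theorem \ref{7}. Set $A = D[X]$ and $S = D - \{0\}$; the atomicity of $A$ is supplied by the hypothesis. I then need to verify that (a) $S$ is a saturated multiplicative subset of $D[X]$, (b) every element of $S$ is a product of $r$-good atoms of $D[X]$, and (c) $A_S$ is an $r$-CHFD.

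For (a), multiplicativity is clear since $D$ is a domain, and saturation holds because a product $fg$ of nonzero polynomials that happens to lie in $D - \{0\}$ forces $\deg f = \deg g = 0$, whence $f,g \in S$. For (b), a nonzero $s \in D$ is either a unit of $D$ (hence of $D[X]$, with the empty product sufficing) or a product of atoms of $D$; by hypothesis these atoms are $r$-good in $D[X]$, so $s$ is the required product. For (c), $A_S$ equals $K[X]$, where $K$ is the fraction field of $D$; since $K[X]$ is a PID and hence a UFD, it is in particular an HFD and therefore an $r$-CHFD for every $r > 1$.

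With (a)--(c) in hand, Theorem \ref{13} delivers the conclusion that $D[X]$ is an $r$-CHFD. There is essentially no obstacle here: the substance of the argument sits entirely inside Theorem \ref{13}, while Corollary \ref{14} is pure packaging. The only delicate point worth flagging is the hypothesis that the atoms of $D$ are $r$-good in $D[X]$, which cannot be dropped in general, as part $(iv)$ of Remark \ref{5} already illustrates in the HFD case: goodness of an atom may be destroyed by passage to the polynomial ring.
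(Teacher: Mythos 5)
Your proposal is correct and is exactly the argument the paper intends: the paper leaves Corollary \ref{14} to the reader as the direct analogue of Corollary \ref{11}, namely applying Theorem \ref{13} with $A=D[X]$ and $S=D-\{0\}$, with $A_S=K[X]$ a UFD and hence an $r$-CHFD. Your verifications of saturation and of the $r$-goodness hypothesis are the right (routine) checks, so nothing further is needed.
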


\end{document}